\newtheorem{theorem}{Theorem}
\newtheorem{lemma}[theorem]{Lemma}
\newtheorem{proposition}[theorem]{Proposition}
\theoremstyle{definition}
\theoremstyle{remark}
\numberwithin{equation}{section}
\newcommand{\Prob}{\mathrm{Prob}}
\newcommand{\E}{\mathrm{E}}
\newcommand{\Var}{\mathrm{Var}}
\begin{document}

\title[Approximating \scalebox{1.5}{$\pi$} with stocks]{Approximately 
  \scalebox{1.5}{$\pi$} proofs that the \\ 
  stock market can approximate \scalebox{1.5}{$\pi$}} 

\author{Sami Assaf}
\address{University of Southern California, 
  Department of Mathematics, 
  3620 S. Vermont Ave. KAP 104, 
  Los Angeles, CA 90089-2532}
\email{shassaf@usc.edu}

\subjclass[2010]{Primary 60G50; Secondary 62J10, 91G10}



\keywords{random walk; variance; volatility}

\begin{abstract}
  We give three derivations of P{\'o}lya's approximation for the
  expected range of a simple random walk in one dimension. This result
  allows for an estimation of the volatility of a financial instrument
  from the difference between the high and low prices, or,
  equivalently, for an estimation of $\pi$ from the ratio of the
  volatility to the difference between high and low prices.
\end{abstract}

\maketitle

\section{Computing volatility}
\label{s:SP}

The \textit{volatility} of a financial instrument, for example the
S\&P 500 stock index or one of its constituent stocks, is a measure of
the degree by which the price of the instrument fluctuates over a
given period of time. Mathematically, volatility is the variance of
the price regarded as a random walk.

Volatility is the key component in options pricing, but it is also
vital for determining the underlying risk of a position and for
determining optimal asset allocation for a portfolio. Therefore having
accurate volatility measurements and forecasts is crucial for the
financial sector.

In practice, direct application of the mathematical definition of
variance to compute historical volatility is complicated by the sheer
volume of trades and inaccurate or missing data. This leads
practicioners to compute historical volatility based on a small
quantization parameter. For example, one can consider the price every
one second and compute variance based on that. Of course, there is an
obvious trade off, where smaller parameters offer more accurate
estimates but require more intensive computations.

A relatively simple, yet surprisingly effective, method for
forecasting future volatility is to take a moving average of
historical volatility. For example, a \textit{simple moving average}
of historical volatility is given by
\begin{equation}
  \mathrm{SMA}_n(V_t) = \frac{1}{n} \sum_{t=1}^{n} V_t ,
  \label{e:vol}
\end{equation}
where $V_t$ denotes historical volatility and $n$ is the length of the
window.

Now instead consider the \textit{range series} given by the difference
between the daily high and low prices for an instrument. This data is
freely available at the close of each trading day. Then one can
compute a simple moving average for the daily range, denoted $R_t$, by
\begin{equation}
  \mathrm{SMA}_n(R_t) = \frac{1}{n} \sum_{t=1}^{n} R_t .
  \label{e:range}
\end{equation}
On any reasonably liquid instrument, that is, something with a high
volume of daily trades, one notices that the ratio of these predictors
over a trading month (on average 21 days) is constant. The precise
approximation is
\begin{equation}
  \frac{\mathrm{SMA}_{21}(V_t)}{\mathrm{SMA}_{21}(R_t)^2} \approx \frac{\pi}{8}.
  \label{e:pi}
\end{equation}
Therefore, if an irrational trader were so inclined, he could use the
volatility and the high and low prices of an instrument to estimate
$\pi$. In practice, however, a rational trader is likely more
interested in efficiently and accurately estimating future
volatility.

Since calculating historical daily volatility is computationally
intensive and is dependent upon a timely and accurate (and expensive)
data feed, independent traders without access to such a feed, or who
are sensitive to the cost of such a feed, as well as traders at larger
firms looking to increase efficiency without losing accuracy, can make
use of \eqref{e:pi}, or, rather, the following mathematical
explanation of it.

\begin{theorem}
  Letting $\sigma^2$ denote the variance of a random walk and $\Delta$
  denote the range of values, we have 
  \begin{equation}
    \E(\Delta) \sim  \sigma\sqrt{\frac{8}{\pi}}.
    \label{e:main}
  \end{equation}
  \label{t:main}
\end{theorem}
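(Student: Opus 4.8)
The plan is to pass from the discrete walk to Brownian motion, where the range functional becomes explicitly computable. Model the instrument by a simple random walk $S_k = X_1 + \cdots + X_k$ with $S_0 = 0$ and with the increments $X_i$ independent, mean zero, and variance one; after $n$ steps the variance of the walk is $\sigma^2 = n$, and the range of values visited is $\Delta = \Delta_n = \max_{0 \le k \le n} S_k - \min_{0 \le k \le n} S_k$. In these terms \eqref{e:main} is the assertion that $\E(\Delta_n) \sim \sqrt{8/\pi}\,\sqrt{n}$ as $n \to \infty$, equivalently as $\sigma \to \infty$. (Before invoking Brownian motion one can already reduce the problem by symmetry: replacing each $X_i$ by $-X_i$ shows $-\min_{0\le k\le n}S_k$ has the same law as $\max_{0\le k\le n}S_k$, so by linearity $\E(\Delta_n) = 2\,\E\!\big(\max_{0\le k\le n}S_k\big)$, and it suffices to identify the asymptotics of the expected maximum.)

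First I would invoke Donsker's theorem: the rescaled path $t \mapsto S_{\lfloor nt\rfloor}/\sqrt{n}$ converges in distribution in $C[0,1]$ to a standard Brownian motion $(B_t)_{0 \le t \le 1}$. Since $\omega \mapsto \max_{0\le t\le 1}\omega(t) - \min_{0\le t\le 1}\omega(t)$ is a continuous functional on $C[0,1]$, the continuous mapping theorem gives
\begin{equation}
  \frac{\Delta_n}{\sqrt{n}} \;\Longrightarrow\; R \;:=\; \max_{0 \le t \le 1} B_t - \min_{0 \le t \le 1} B_t .
\end{equation}
Distributional convergence alone does not deliver convergence of expectations, so the next step is a uniform integrability bound: Doob's $L^2$ maximal inequality gives $\E\big[(\max_{0\le k\le n} S_k)^2\big] \le 4\,\E(S_n^2) = 4n$ and the same for the minimum, hence $\E(\Delta_n^2) \le Cn$, so $\{\Delta_n/\sqrt{n}\}$ is bounded in $L^2$ and therefore uniformly integrable. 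Consequently $\E(\Delta_n)/\sqrt{n} \to \E(R)$.

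It then remains to compute $\E(R) = \sqrt{8/\pi}$. Write $M = \max_{0\le t\le 1} B_t$ and $m = \min_{0\le t\le 1} B_t$. By the reflection principle, $\Prob(M \ge a) = 2\,\Prob(B_1 \ge a)$ for $a \ge 0$, so $M$ has the law of $|B_1|$ and hence $\E(M) = \E|B_1| = \sqrt{2/\pi}$; since $-B$ is again a standard Brownian motion, $-m$ has the law of $M$, so $\E(-m) = \sqrt{2/\pi}$ as well. Linearity of expectation — which needs no independence of $M$ and $m$ — yields $\E(R) = \E(M) - \E(m) = 2\sqrt{2/\pi} = \sqrt{8/\pi}$, which together with the previous paragraph proves \eqref{e:main}. (Two variant derivations: a purely discrete one combines the reduction $\E(\Delta_n)=2\,\E(\max_{0\le k\le n}S_k)$ with the reflection identity $\Prob(\max_{0\le k\le n}S_k \ge r) = \Prob(S_n \ge r) + \Prob(S_n \ge r+1)$ and the local central limit theorem to get $\E(\max_{0\le k\le n}S_k) \sim \sqrt{2n/\pi}$ directly; and a third can be extracted from Spitzer-type identities for the expected one-sided maximum.)

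The step I expect to be the genuine obstacle is not the Brownian computation, which is a one-line reflection argument, but justifying the interchange of limit and expectation cleanly — that is, the uniform integrability claim. Controlling $\E(\Delta_n^2)$ by Doob's inequality (or, as an alternative, embedding $S$ and $B$ on a common probability space via Skorokhod's construction and estimating the coupling error) is exactly what upgrades the distributional limit into the statement about means recorded in \eqref{e:main}.
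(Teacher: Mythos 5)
Your proof is correct, but it takes a genuinely different route from the paper. The paper never touches Brownian motion: it writes the range as a sum of indicators $\delta_k$ recording whether a new site is visited at epoch $k$, converts $\E(\delta_k)$ via the Dvoretzky--Erd\H{o}s time-reversal lemma into the probability of no return to the origin, identifies that (by a path bijection) with the return probability $2^{-2k}\binom{2k}{k}$, and then extracts the asymptotics $\E(\delta_n)\sim\sqrt{2/\pi n}$ by Stirling's formula --- with two further variants that package the same combinatorics into the generating function $\Delta(z)=\sqrt{1-z^2}/(1-z)^2$ and apply either the Tauberian theorem or $\Gamma$-function expansions. You instead use Donsker's invariance principle, the continuous mapping theorem, an $L^2$ uniform-integrability bound from Doob's maximal inequality to upgrade weak convergence to convergence of means, and the reflection principle to compute $\E(\max_{0\le t\le 1}B_t - \min_{0\le t\le 1}B_t) = 2\E|B_1| = \sqrt{8/\pi}$; each of these steps is sound (and you correctly flag the uniform integrability as the point that actually needs an argument, handling it properly). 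What your approach buys is generality and economy: it proves the theorem for any mean-zero, finite-variance increment distribution in one stroke, which is arguably closer to the statement of Theorem~\ref{t:main} than the paper's proofs, all of which are really proofs of the $\pm 1$ special case (Theorem~\ref{t:Delta_approximation}). What the paper's combinatorial and generating-function routes buy is elementarity (no weak convergence machinery) and, in the $\Gamma$-function version, an explicit error expansion $\E(\Delta_n)=\sqrt{8n/\pi}\,(1+\tfrac{1}{4n}-\tfrac{1}{32n^2}+O(n^{-3}))$, which your invariance-principle argument cannot produce. The only cosmetic mismatch is that the paper's $\Delta_n$ counts distinct sites, i.e.\ equals $\max-\min+1$ rather than $\max-\min$, but this does not affect the leading asymptotics.
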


The prudent trader can use the range series $R_t$, computed using only
$2$ data points, as a surrogate for the volatility series $V_t$,
computed using at least $21600$ data points (for a $6$ hour trading
day with $1$ second quantization).

\section{Approximating volatility}
\label{s:surrogate}

We turn our focus now to determining the accuracy of estimating the
variance of a random walk using the range of values attained. To
begin, we consider a simple, symmetric, one-dimensional random walk on
the integers.

Let $X$ be a discrete, symmetric, one-dimensional random variable. For
example, let $X$ take values $\{\pm 1\}$ each with probability $1/2$,
i.e.
\begin{equation}
  \Prob\{X = +1\} = \frac{1}{2} \hspace{3em} 
  \Prob\{X = -1\} = \frac{1}{2}.
  \label{e:prob}
\end{equation}
The \emph{expectation} of $X$ is $\E(X) = \sum_x x \Prob\{X =
x\}$. For the example,
\begin{equation}
  \E(X) = (+1) \cdot \frac{1}{2} + (-1) \cdot \frac{1}{2} = 0.
\end{equation}
In general, the expectation of a symmetric random walk is always $0$
since
\begin{displaymath}
  \E(X) = \sum_{x<0} x \Prob\{X=x\} + \sum_{x>0} x \Prob\{X=x\} =
  \sum_{x>0} (x-x) \Prob\{X=x\} = 0.
\end{displaymath}
The \emph{variance} of $X$ is $\Var(X) = \E\left( (X - E(X))^2 \right)
= \E(X^2) - \E(X)^2$. Again, for the example we have
\begin{equation}
  \Var(X) = 1 - 0 = 1.
\end{equation}

Define a new random variable $S$ by summing successive independent
trials of $X$, i.e.
\begin{displaymath}
  S_k = X_1 + X_2 + \cdots + X_k
\end{displaymath}
with the convention $S_0 = 0$. Since the trials are independent, the
expectation of $S_k$ is
\begin{equation}
  \E(S_k) = \E\left(\sum_{i=1}^{k} X_i\right) = \sum_{i=1}^{k} \E(X_i)
  = 0.
\end{equation}
and the variance of $S_k$ is
\begin{equation}
  \Var(S_k) = \Var\left(\sum_{i=1}^{k} X_i\right) = \sum_{i=1}^{k} \Var(X_i).
\end{equation}
For the example, since $\Var(X_i) = 1$, we have $\Var(S_k)=k$.

We say that the $n$th trial $X_n$ occurs at
\emph{epoch}\footnote{Following Feller\cite{FellerI} who follows
  Riordan, the word \emph{epoch} is used to denote \emph{points} on
  the time axis because some contexts use the alternative terms (such
  as moment, time, point) in different meanings.} $n$. We call the
successive partial sums $S_k$ the \emph{positions} of a particle
performing a random walk and mark these values on the vertical axis. A
particular point on the vertical axis will be referred to as a
\emph{site}. For example, Figure~\ref{f:walk} depicts a $40$ step
random walk for $S_k$.

\begin{figure}[ht]
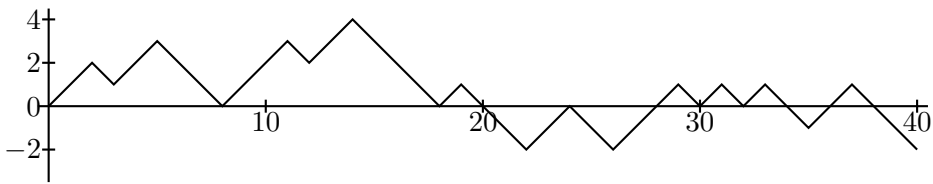

  \begin{center}
    \psset{xunit=.75em}
    \psset{yunit=.75em}
    \pspicture(-1,-4)(41,5)
    \psline(0,-3.5)(0,4.5)
    \psline(-0.3,4)(0.3,4)
    \rput(-0.7,4){$4$}
    \psline(-0.3,2)(0.3,2)
    \rput(-0.7,2){$2$}
    \rput(-0.7,0){$0$}
    \psline(-0.3,-2)(0.3,-2)
    \rput(-1.2,-2){$-2$}
    \psline(-0.5,0)(40.5,0)
    \psline(10,0.3)(10,-0.3)
    \rput(10,-0.7){$10$}
    \psline(20,0.3)(20,-0.3)
    \rput(20,-0.7){$20$}
    \psline(30,0.3)(30,-0.3)
    \rput(30,-0.7){$30$}
    \psline(40,0.3)(40,-0.3)
    \rput(40,-0.7){$40$}
    \psline(0,0)(1,1)(2,2)(3,1)(4,2)(5,3)(6,2)(7,1)(8,0)(9,1)(10,2)(11,3)(12,2)(13,3)(14,4)(15,3)(16,2)(17,1)(18,0)(19,1)(20,0)(21,-1)(22,-2)(23,-1)(24,0)(25,-1)(26,-2)(27,-1)(28,0)(29,1)(30,0)(31,1)(32,0)(33,1)(34,0)(35,-1)(36,0)(37,1)(38,0)(39,-1)(40,-2)
    \endpspicture
    \caption{\label{f:walk} A geometric depiction of the random walk
      $S_k$.}
  \end{center}
\end{figure}

Define a new random variable $\Delta_k$ to be the number of distinct
sites visited during the random walk up to epoch $k$. That is,
\begin{equation}
  \Delta_k = \#\{S_0, \ldots, S_k\} 
  = \max\{S_j\}_{j=0}^{k} - \min\{S_j\}_{j=0}^{k} + 1.
  \label{e:Delta}
\end{equation}
For the example, Theorem~\ref{t:main} becomes the following, first
(foot)noted by P{\'o}lya \cite{Polya1938}.

\begin{theorem}
  For $n$ large, the expectation of $\Delta_n$ is approximately given
  by
  \begin{equation}
    \E(\Delta_n) \sim  \sqrt{\frac{8n}{\pi}} \approx 1.5958 \sqrt{n}.
    \label{e:expect_Delta}
  \end{equation}
  \label{t:Delta_approximation}
\end{theorem}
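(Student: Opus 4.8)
The plan is to peel $\Delta_n$ down to the expected absolute value $\E\lvert S_n\rvert$ of the walk, which Stirling's formula controls directly. Write $M_n = \max\{S_j\}_{j=0}^{n}$ and $m_n = \min\{S_j\}_{j=0}^{n}$, so that $\Delta_n = M_n - m_n + 1$ by \eqref{e:Delta}. Since the walk is symmetric, replacing each step $X_i$ by $-X_i$ interchanges the laws of $M_n$ and $-m_n$, whence $\E(m_n) = -\E(M_n)$ and therefore $\E(\Delta_n) = 2\,\E(M_n) + 1$. Because the additive constant is negligible against $\sqrt{n}$, it suffices to show $\E(M_n) \sim \sqrt{2n/\pi}$.

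For $\E(M_n)$ I would invoke the reflection principle: for each integer $a \ge 1$, reflecting the walk at the first epoch it reaches height $a$ gives $\Prob\{M_n \ge a\} = \Prob\{S_n \ge a\} + \Prob\{S_n \ge a+1\} = 2\,\Prob\{S_n \ge a\} - \Prob\{S_n = a\}$, the last term vanishing unless $a \equiv n \pmod 2$. Summing over $a \ge 1$, using $\E(M_n) = \sum_{a\ge 1}\Prob\{M_n \ge a\}$ together with the elementary identity $\sum_{a \ge 1}\Prob\{S_n \ge a\} = \E\bigl(\max(S_n,0)\bigr) = \tfrac12\E\lvert S_n\rvert$ (valid because $\E(S_n)=0$), the sum collapses to $\E(M_n) = \E\lvert S_n\rvert - \tfrac12 + \tfrac12\Prob\{S_n=0\}$. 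Combined with the previous paragraph this yields the exact relation $\E(\Delta_n) = 2\,\E\lvert S_n\rvert + \Prob\{S_n=0\}$, so everything now reduces to the asymptotics of $\E\lvert S_n\rvert$.

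For that ingredient, conditioning on $S_n$ shows $\E\lvert S_{n+1}\rvert = \E\lvert S_n\rvert + \Prob\{S_n = 0\}$, which telescopes to $\E\lvert S_n\rvert = \sum_{j=0}^{n-1}\Prob\{S_j = 0\}$, where only the even indices $j=2i$ contribute and $\Prob\{S_{2i}=0\} = \binom{2i}{i}2^{-2i}$. Stirling's approximation gives $\binom{2i}{i}2^{-2i} \sim 1/\sqrt{\pi i}$, and summing the roughly $n/2$ nonzero terms via $\sum_{i\le n/2} i^{-1/2} \sim 2\sqrt{n/2}$ produces $\E\lvert S_n\rvert \sim (2/\sqrt{\pi})\sqrt{n/2} = \sqrt{2n/\pi}$. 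Feeding this into the displayed relation, with $\Prob\{S_n=0\} = O(n^{-1/2})$ harmless, gives $\E(\Delta_n) \sim 2\sqrt{2n/\pi} = \sqrt{8n/\pi} \approx 1.5958\sqrt{n}$. The steps demanding the most care are the parity bookkeeping in the reflection identity and checking that the constant and $\Prob\{S_n=0\}$ terms are genuinely lower order; the Stirling estimate and the sum $\sum i^{-1/2}$ are routine. An alternative route, perhaps nearer to P\'olya's original footnote, instead writes $\Delta_n = 1 + \sum_{k=1}^{n}\mathbf{1}\{S_k \notin \{S_0,\dots,S_{k-1}\}\}$, observes by reversing the walk that the $k$th term has expectation $2\,\Prob\{S_1>0,\dots,S_k>0\} \sim \sqrt{2/(\pi k)}$, and sums these.
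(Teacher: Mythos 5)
Your argument is correct, but it takes a genuinely different route from the paper's elementary proof. The paper decomposes $\Delta_n$ \emph{temporally}, writing $\Delta_n = \sum_{k=0}^{n}\delta_k$ where $\delta_k$ indicates that a new site is visited at epoch $k$; a time-reversal argument (Lemma~\ref{l:DE}) identifies $\E(\delta_k)$ with the probability of no return to the origin by epoch $k$, a path bijection (Lemma~\ref{l:return}) converts that to the probability of being \emph{at} the origin at epoch $2k$, and Stirling finishes. You instead decompose \emph{spatially}, $\Delta_n = M_n - m_n + 1$, and use the reflection principle for the running maximum to reach the exact identity $\E(\Delta_n) = 2\,\E\lvert S_n\rvert + \Prob\{S_n=0\}$, then the telescoping recursion $\E\lvert S_{n+1}\rvert = \E\lvert S_n\rvert + \Prob\{S_n=0\}$ to reduce everything to the same central binomial asymptotics. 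Your intermediate identities all check out; for instance your exact formula gives $\E(\Delta_4) = 3 + \tfrac{3}{8} = 3.375$, matching Table~\ref{table}, and indeed both proofs ultimately compute the same exact quantity, since $2\sum_{j=0}^{n-1}\Prob\{S_j=0\} + \Prob\{S_n=0\} = \sum_{j=0}^{n}\E(\delta_j)$. What your route buys is an exact closed form with explicit lower-order terms, obtained without the Dvoretzky--Erd{\"o}s reversal or the ballot-type bijection, which could feed directly into the error analysis of Section~\ref{s:Gamma}. What the paper's route buys is a formulation (number of distinct sites visited) that generalizes beyond one dimension and plugs straight into the generating-function proofs of Section~\ref{s:generating_function} via $\delta(z) = 1/(P_0(z)(1-z))$. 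The alternative you sketch in your final sentence is essentially the paper's actual argument.
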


Therefore an accurate estimate for the expectation of the range of the
walk (the daily high minus low of an instrument) yields an estimate
for the variance (the volatility of the instrument).

The relevance of Theorem~\ref{t:Delta_approximation} to the stock
market hinges upon one's belief that the market behaves like a
symmetric random walk. An alternative but equally reasonable
assumption is that the market behaves like a persistent random
walk. In this case, there is a single parameter $\alpha$ taken between
$0$ and $1$, and the probabilities for the random variable $X$ become
\begin{equation}
  \Prob\{X_i = X_{i-1}\} = \alpha \hspace{3em} 
  \Prob\{X_i = -X_{i-1}\} = 1-\alpha,
  \label{e:prob-persist}
\end{equation}
with the usual convention that $X_1$ satisfies \eqref{e:prob}. For
this case, Theorem~\ref{t:main} becomes the following.

\begin{theorem}
  For $n$ large, the expectation of $\Delta_n$ for a persistent random
  walk with parameter $\alpha$ is approximately given by
  \begin{equation}
    \E(\Delta_n) \sim  \sqrt{\frac{8n\alpha}{\pi(1-\alpha)}} .
    \label{e:persistent_Delta}
  \end{equation}
  \label{t:persistent}
\end{theorem}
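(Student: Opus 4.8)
The key is to reduce the persistent case to the symmetric case already handled in Theorem~\ref{t:Delta_approximation}. The plan is to show that a persistent random walk with parameter $\alpha$, observed over $n$ steps, behaves asymptotically like a symmetric random walk over $n$ steps but with step variance rescaled by the factor $\alpha/(1-\alpha)$. Then \eqref{e:persistent_Delta} follows immediately from Theorem~\ref{t:Delta_approximation} (or rather its scale-invariant reformulation Theorem~\ref{t:main}) by substituting $\sigma^2 = n\alpha/(1-\alpha)$ into \eqref{e:main}.

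First I would compute $\Var(S_n)$ for the persistent walk directly. Writing $S_n = \sum_{i=1}^n X_i$ with each $X_i \in \{\pm 1\}$, one has $\E(X_i) = 0$ by symmetry (the defining relation \eqref{e:prob-persist} is symmetric under global sign flip), so $\Var(S_n) = \sum_{i,j} \E(X_i X_j)$. The correlation $\E(X_i X_{i+k})$ is governed by a two-state Markov chain: conditioning step by step, $\E(X_{i+1}\mid X_i) = (2\alpha - 1)X_i$, and iterating gives $\E(X_i X_{i+k}) = (2\alpha-1)^{|k|}$. Summing the resulting geometric series, $\Var(S_n) = n + 2\sum_{k=1}^{n-1}(n-k)(2\alpha-1)^k \sim n \cdot \frac{1+(2\alpha-1)}{1-(2\alpha-1)} = n\cdot\frac{\alpha}{1-\alpha}$ as $n \to \infty$ (valid for $0 < \alpha < 1$, where $|2\alpha-1| < 1$).

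Next I would invoke a functional central limit theorem for the persistent walk: since the $X_i$ form a stationary sequence with exponentially decaying correlations, Donsker's invariance principle applies, so $S_{\lfloor nt \rfloor}/\sqrt{n}$ converges in distribution (in the Skorokhod sense on $[0,1]$) to $\sqrt{\alpha/(1-\alpha)}\,B_t$, a Brownian motion with the computed diffusion constant. The range functional $\Delta_n = \max_j S_j - \min_j S_j + 1$ is (up to the negligible additive constant $1$) a continuous functional of the path, so $\Delta_n/\sqrt{n}$ converges in distribution — and, after justifying uniform integrability, in expectation — to $\sqrt{\alpha/(1-\alpha)}$ times the expected range of standard Brownian motion on $[0,1]$. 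That last quantity is $\sqrt{8/\pi}$, which is precisely the content of Theorem~\ref{t:main}/\ref{t:Delta_approximation}. Combining the two displayed factors yields \eqref{e:persistent_Delta}.

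The main obstacle is the passage from convergence in distribution of $\Delta_n/\sqrt{n}$ to convergence of its \emph{expectation}: this requires a uniform integrability bound, e.g. a uniform estimate $\E\big((\Delta_n/\sqrt{n})^2\big) \le C$ independent of $n$. For the symmetric walk this follows from the reflection principle; for the persistent walk I would obtain it either by a coupling argument comparing the persistent walk to a symmetric walk run on a slower clock, or by a direct maximal-inequality estimate exploiting the exponential mixing. A secondary, more bookkeeping-level issue is confirming that the constant $\sqrt{8/\pi}$ in Theorem~\ref{t:Delta_approximation} is genuinely the Brownian range expectation rather than an artifact of the lattice — but since the excerpt permits us to assume Theorem~\ref{t:main}, this can be cited rather than reproved.
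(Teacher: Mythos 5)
Your proposal is correct and follows essentially the same route as the paper, whose entire argument is the one-sentence remark that the persistent walk's variance is $\alpha/(1-\alpha)$ times that of the symmetric walk, so that the result follows from Theorem~\ref{t:main} and Theorem~\ref{t:Delta_approximation}. You go considerably further than the paper by actually deriving the correlation decay $\E(X_iX_{i+k})=(2\alpha-1)^{k}$ and the resulting asymptotic variance, invoking the invariance principle to justify that the range really does rescale by the diffusion constant, and flagging the uniform integrability needed to upgrade convergence in distribution to convergence of expectations --- all details the paper omits entirely.
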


Since the variance of a persistent walk is proportional to the
variance of a symmetric walk with constant of proportionality
$\alpha/(1-\alpha)$, Theorem~\ref{t:persistent} follows as a corollary
to Theorem~\ref{t:Delta_approximation}.

\subsection{Expected errors}
\label{s:expect}

It is important to note that \eqref{e:expect_Delta} holds only for the
\emph{expectation} of $\Delta$ and not for any particular instance of
$\Delta$. 

For the example in Figure~\ref{f:walk}, $\Delta_{40} = 7$ whereas the
right hand side of \eqref{e:expect_Delta} is $10.09$. The actual value
of $E(\Delta_{40})$ is approximately $10.16$. Therefore error from
using $\Delta$ in place of $\E(\Delta)$ is due to the variance of
$\Delta$. 

\begin{theorem}
  For $n$ large, the variance of $\Delta_n$ is approximately given
  by
  \begin{equation}
    \Var(\Delta_n) \sim 4n \left(\ln 2 - \frac{2}{\pi}\right) \approx 
    0.2181 n.
    \label{e:var_Delta}
  \end{equation}
  \label{t:var_Delta_approximation}
\end{theorem}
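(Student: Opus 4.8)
The plan is to reduce the statement to a variance computation for Brownian motion and then carry that out with the reflection principle. Write $M_n = \max_{0\le j\le n} S_j$ and $m_n = \min_{0\le j\le n} S_j$, so that $\Delta_n = M_n - m_n + 1$ and hence $\Var(\Delta_n) = \Var(M_n - m_n)$, the additive constant being irrelevant to the variance. Since $\Var(X) = 1$, Donsker's invariance principle gives $(S_{\lceil nt\rceil}/\sqrt{n})_{0\le t\le 1} \Rightarrow (B_t)_{0\le t\le 1}$ in $C[0,1]$, and applying the continuous functionals $\max$ and $\min$ yields the joint convergence $(M_n/\sqrt n,\, m_n/\sqrt n) \Rightarrow (M, m)$ with $M = \sup_{[0,1]} B$ and $m = \inf_{[0,1]} B$. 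To promote this to convergence of variances I would verify uniform integrability of $\big((M_n - m_n)^2/n\big)_{n\ge 1}$: Doob's maximal inequality applied to the martingale $(S_j)$ bounds $\E(M_n^{4})$ and $\E(m_n^{4})$ by $O(n^2)$, so the family is $L^2$-bounded. Consequently $\Var(\Delta_n)/n \to \Var(R)$, where $R := M - m$ is the range of standard Brownian motion on $[0,1]$, and it remains to prove $\Var(R) = 4(\ln 2 - 2/\pi)$.

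Theorem~\ref{t:Delta_approximation} already supplies $\E(\Delta_n) \sim \sqrt{8n/\pi}$, i.e.\ $\E(R) = \sqrt{8/\pi}$ and $\E(R)^2 = 8/\pi$, so it suffices to show $\E(R^2) = 4\ln 2$. Expanding the square and using $-m \overset{d}{=} M$ gives $\E(R^2) = 2\E(M^2) - 2\E(Mm)$; by the reflection principle $M \overset{d}{=} |B_1|$, so $\E(M^2) = \E(B_1^2) = 1$ and the problem comes down to the max--min correlation $\E(Mm) = 1 - 2\ln 2$.

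This last identity is where I expect the real work to lie. For $a, b > 0$, the event $\{M \le a,\ -m \le b\}$ is exactly the event that $B$ stays inside the strip $(-b, a)$ throughout $[0,1]$, whose probability $Q(a,b)$ I would expand by the method of images as the alternating theta series generated by reflecting the point source successively across the two barriers. Writing
\begin{displaymath}
  -\E(Mm) = \E\big(M\cdot(-m)\big) = \int_0^\infty\!\!\int_0^\infty \Prob\{M > a,\ -m > b\}\, da\, db
\end{displaymath}
and substituting $\Prob\{M>a,\,-m>b\} = 1 - \Prob\{M\le a\} - \Prob\{-m\le b\} + Q(a,b)$, where $\Prob\{M\le a\} = 2\Phi(a)-1$ and $\Prob\{-m\le b\}=2\Phi(b)-1$ by reflection, the single-barrier terms integrate to elementary Gaussian expressions while the image series, integrated term by term in $a$ and then in $b$, collapses to the alternating harmonic series $\sum_{k\ge 1}(-1)^{k-1}/k = \ln 2$; this gives $-\E(Mm) = 2\ln 2 - 1$. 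The delicate points are the bookkeeping of signs and indices in the image expansion and the justification of the term-by-term integration, which I would handle with a crude Gaussian tail bound and dominated convergence. Combining, $\E(R^2) = 2\cdot 1 - 2(1 - 2\ln 2) = 4\ln 2$, whence $\Var(R) = 4\ln 2 - 8/\pi$, and the first paragraph gives $\Var(\Delta_n) \sim 4n(\ln 2 - 2/\pi)$.

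An alternative route, closer in spirit to the generating-function derivations used for Theorem~\ref{t:Delta_approximation}, is to stay on the lattice throughout: compute the asymptotics of $\E(M_n)$, $\E(M_n^2)$ and $\E(M_n m_n)$ from Feller's generating functions for a walk confined to a strip of integer width $w$, where summation over $w$ again produces the $\ln 2$, and let a Tauberian theorem play the role of the uniform-integrability step. Either way, the crux is the emergence of $\ln 2$ from the doubly-reflected walk — precisely the feature absent from the mean, where a single reflection suffices and only $\pi$ appears.
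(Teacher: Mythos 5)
The paper states Theorem~\ref{t:var_Delta_approximation} without any proof --- it is Feller's 1951 result on the asymptotic distribution of the range --- so there is no argument of the paper's to compare yours against; your route (Donsker's invariance principle plus a computation of the variance of the Brownian range) is the classical one. Your first paragraph is correct and, unusually for this kind of sketch, actually justified: $\E[S_n^4]=3n^2-2n$ together with Doob's $L^4$ maximal inequality gives $\E[(M_n-m_n)^4]=O(n^2)$, hence uniform integrability of $(M_n-m_n)^2/n$, hence convergence of the normalized variance to $\Var(R)$. The target identities $\E(R)^2=8/\pi$, $\E(M^2)=1$, $\E(Mm)=1-2\ln 2$, $\E(R^2)=4\ln 2$ are all correct, and they do close to $\Var(R)=4(\ln 2-2/\pi)$. (A sanity check on the constant: $4(\ln 2-2/\pi)=0.2261\ldots$, so the decimal $0.2181$ in the statement, and the value $8.724$ quoted for $n=40$, appear to be numerical slips in the paper; your derivation gives the right closed form.)

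Two gaps remain, both in your third paragraph. First, the four-term decomposition $\Prob\{M>a,\,-m>b\}=1-\Prob\{M\le a\}-\Prob\{-m\le b\}+Q(a,b)$ cannot be integrated term by term over the quadrant: each piece individually has a divergent double integral (already $\int_0^\infty\!\!\int_0^\infty 1\,da\,db=\infty$), so the ``single-barrier terms integrate to elementary Gaussian expressions'' step fails as written. You must regroup before integrating --- e.g.\ write $\Prob\{M>a,\,-m>b\}=\Prob\{M>a\}-\Prob\{M>a,\,-m\le b\}$ and integrate the difference, or sidestep the whole issue by using Feller's series for the distribution of the range, from which $\E(R^2)=4\sum_{k\ge 1}(-1)^{k-1}/k=4\ln 2$ in one line. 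Second, the heart of the matter --- the image expansion of $Q(a,b)$ and the collapse of the integrated series to the alternating harmonic series --- is asserted rather than carried out; you correctly identify this as where the work lies, but as written the proof defers exactly the step that produces the $\ln 2$. With the regrouping repaired and that computation supplied, the argument is complete; the lattice-side alternative you mention at the end (strip-confined generating functions plus a Tauberian theorem) would also work and would harmonize better with Section~\ref{s:generating_function}, but it is no less work.
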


For $n=40$, the right hand side of \eqref{e:var_Delta} is $8.724$, so
this example is more illustrative than exceptional.

When using the range as a surrogate for volatility in a simple moving
average, having a small window can lead to large errors due to the
variance of the range. For instance, if a trader chooses to use a
moving average over a trading month, then he can expect little error
from the variance of the range. However, if he instead chooses to use
only the prior day's historical volatility, then using the range
instead will likely lead to significant errors in the forecast.

\subsection{Approximate errors}
\label{s:approx}

For the sake of concreteness, the following table gives the estimates
and errors for $n \leq 7$.

\newcommand{\rrc}{@{\extracolsep{10pt}}r}
 \begin{table}[ht]
  \begin{center}
    \caption{\label{table} Estimates and errors for $\E(\Delta_n)$.}
    \begin{tabular}{@{\extracolsep{1pt}}c|
        @{\extracolsep{5pt}}r
        \rrc\rrc\rrc\rrc\rrc 
        \rrc\rrc@{\extracolsep{0pt}}}
      \hline
      $n$ & 0 & 1 & 2 & 3 & 4 & 5 & 6 & 7 \\ \hline
      $\E(\Delta_n)$ & 1.0000 & 2.0000 & 2.5000 & 3.0000 & 3.3750 &
      3.7500 & 4.0625 & 4.3750 \\
      $\sqrt{\frac{8n}{\pi}}$ & 0.0000 & 1.5958 & 2.2568 & 2.7640 &
      3.1915 & 3.5682 & 3.9088 & 4.2220 \\
      error & 1.0000 & 0.4040 & 0.2432 & 0.2360 & 0.1835 & 0.1818 &
      0.1537 & 0.1530 \\
      $\%$ error & 100.00 & 20.21 & 9.73 & 7.87 & 5.44 & 4.85 & 3.78 &
      3.50 \\ \hline 
    \end{tabular}
  \end{center}
\end{table}

In order to understand the rate of convergence more precisely, we
present three proofs: using Stirling's formula, using the Tauberian
Theorem, and using properties of the $\Gamma$-function.

\section{An elementary approach}
\label{s:elementary}

In order to prove Theorem~\ref{t:Delta_approximation}, we refine
$\Delta_k$ with a new random variable $\delta_k$ defined by
\begin{equation}
  \delta_k = \left\{ \begin{array}{rl}
      1 & \mbox{a new site is visited at epoch $k$,} \\
      0 & \mbox{otherwise.}
    \end{array} \right.
  \label{e:delta}
\end{equation}
Then clearly we have
\begin{equation}
  \Delta_k = \delta_0 + \delta_1 + \cdots + \delta_k.
  \label{e:deltas}
\end{equation}
If we can derive a formula for $\E(\delta_k)$, then \eqref{e:deltas}
will allow us transform it into a formula for $\E(\Delta_k)$.

To begin, note that since $\delta_k \in \{0,1\}$, we have
\begin{equation}
  \E(\delta_k) = \Prob\{\delta_k = 1\}.
  \label{e:expect_delta}
\end{equation}
Dvoretzky and Erd{\"o}s \cite{DvoretzkyErdos1951} gave the following
alternative interpretation for $\E(\delta_k)$.

\begin{lemma}
  The expectation of $\delta_k$ is given by
  \begin{equation}
    \E(\delta_k) = \Prob\{\mbox{\emph{the origin has not been revisited
        by epoch $n$}}\}.
  \end{equation}
  \label{l:DE}
\end{lemma}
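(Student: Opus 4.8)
The plan is to establish a bijection between walks of a given length that visit a new site at the final epoch and walks of the same length that avoid the origin after the start. First I would observe that $\delta_k = 1$ precisely when $S_k \notin \{S_0, S_1, \ldots, S_{k-1}\}$, i.e. when the position at epoch $k$ is strictly larger than all previous maxima or strictly smaller than all previous minima. By the $\pm 1$ structure of the walk, this happens exactly when $S_k = \max\{S_0,\ldots,S_{k}\}$ and $S_k > \max\{S_0,\ldots,S_{k-1}\}$, or the symmetric statement with minimum. So $\Prob\{\delta_k = 1\}$ is twice the probability that epoch $k$ sets a strict new record high (the factor of two coming from the up/down symmetry, with no double-counting since a walk cannot simultaneously set a new high and a new low at the same epoch for $k \ge 1$).

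Next I would reverse time. Given a walk $(S_0, S_1, \ldots, S_k)$, consider the reversed increment sequence, which produces a new walk $(S'_0, \ldots, S'_k)$ with $S'_j = S_k - S_{k-j}$; since the increments $X_i \in \{\pm1\}$ are i.i.d. and symmetric, the reversed walk has the same distribution as the original. Under this reversal, the event ``$S_k$ is a strict new maximum at epoch $k$,'' i.e. $S_j < S_k$ for all $j < k$, translates into ``$S'_{k-j} = S_k - S_j > 0$ for all $j < k$,'' that is, $S'_i > 0$ for all $i = 1, \ldots, k$ — the reversed walk stays strictly positive away from the origin. Hence the probability of a strict new record high at epoch $k$ equals the probability that a walk of length $k$ does not return to the origin after its first step, and doubling (to account for the strictly-positive and strictly-negative cases, which are disjoint and equiprobable) gives $\E(\delta_k) = \Prob\{S_j \neq 0 \text{ for } 1 \le j \le k\}$, which is the asserted statement about the origin not being revisited by epoch $k$ (the $n$ in the lemma statement should read $k$).

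The main obstacle — really the only subtle point — is bookkeeping around epoch $0$ and the strictness of the inequalities: one must be careful that ``a new site is visited at epoch $k$'' means $S_k$ differs from \emph{all} of $S_0, \ldots, S_{k-1}$, and to check that for a $\pm 1$ walk this is equivalent to $S_k$ being a one-sided strict record, with the two sides mutually exclusive for $k \ge 1$. I would also note explicitly that ``the origin has not been revisited by epoch $k$'' is the event $\{S_1 \neq 0, \ldots, S_k \neq 0\}$ and that, because consecutive positions differ by $1$, a walk that is nonzero throughout $1 \le j \le k$ is automatically either strictly positive throughout or strictly negative throughout — this is what matches the two-sided record count on the nose. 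The argument is purely combinatorial and requires no estimates; the asymptotic content enters only when this lemma is later combined with the known formula for the return probability.
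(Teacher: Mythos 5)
Your proof is correct and rests on the same key idea as the paper's: reversing the increments so that the event ``no epoch $i<k$ with $S_i=S_k$'' becomes ``the reversed walk does not return to the origin by epoch $k$.'' The detour through strict record highs, the factor of two, and the recombination at the end is harmless but unnecessary --- the paper applies time reversal directly to the event $\{S_k \neq S_i \mbox{ for all } i<k\}$, which also avoids relying on the nearest-neighbor structure of the walk.
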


\begin{proof}
  The event $\delta_k=1$ occurs if and only if there is no loop
  beginning at epoch $i$ and returning at epoch $k$ for any
  $i$. Reversing time, this is equivalent to stating that the particle
  does not return to the origin at epoch $k-i$ for any $i$. The result
  now follows from \eqref{e:expect_delta}.
\end{proof}

To make use of this result, we now derive a formula for the
probability that the particle is at the origin at epoch $n$. At this
point we restrict our attention to the running example in order to
make the problem more concrete. As often is the case, it is simpler to
derive a more general formula. For $n$ an epoch and $r$ a site, define
$p_{n,r}$ by
\begin{equation}
  p_{n,r} = \Prob\{\mbox{at epoch $n$, the particle is at site $r$}\}.
\end{equation}
Since a particle can only return to the origin after an even number of
steps, we shall always take $n$ even when $r=0$.

\begin{proposition}
  For $n$ an epoch and $r$ a site, we have
  \begin{equation}
    p_{n,r} = \frac{1}{2^n}\binom{n}{(n+r)/2},
    \label{e:pnr}
  \end{equation}
  where the binomial coefficient is $0$ is the lower term is not an
  integer.
  \label{p:visit}
\end{proposition}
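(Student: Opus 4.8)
The plan is to compute $p_{n,r}$ by a direct counting argument. A path of $n$ steps starting at the origin is a sequence $(X_1,\dots,X_n)$ with each $X_i\in\{+1,-1\}$, and there are $2^n$ such sequences, each equally likely by independence and \eqref{e:prob}. The particle is at site $r$ at epoch $n$ precisely when $\sum_{i=1}^n X_i = r$. If we let $a$ be the number of $+1$ steps and $b$ the number of $-1$ steps, then $a+b=n$ and $a-b=r$, so $a=(n+r)/2$ and $b=(n-r)/2$. Thus the event occurs exactly when we can choose which $a=(n+r)/2$ of the $n$ epochs carry a $+1$ step, and there are $\binom{n}{(n+r)/2}$ ways to do so. Dividing the count of favorable sequences by the total $2^n$ yields \eqref{e:pnr}.

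The key steps, in order, are: (1) observe that each of the $2^n$ sign sequences has probability $2^{-n}$; (2) translate ``position $r$ at epoch $n$'' into the linear system $a+b=n$, $a-b=r$ and solve it to get $a=(n+r)/2$; (3) count the sequences with exactly $a$ plus-steps as $\binom{n}{a}$; (4) assemble $p_{n,r}=2^{-n}\binom{n}{(n+r)/2}$.

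The only subtlety — and it is the point the proposition itself flags — is the parity/integrality condition. The system $a=(n+r)/2$, $b=(n-r)/2$ has a nonnegative integer solution only when $n\equiv r \pmod 2$ and $|r|\le n$. When $(n+r)/2$ is not an integer, no sign sequence lands at $r$, so $p_{n,r}=0$, which matches the stated convention that $\binom{n}{(n+r)/2}=0$ in that case; similarly $\binom{n}{(n+r)/2}=0$ automatically handles $|r|>n$. I would state this explicitly rather than gloss over it, since the earlier remark that $n$ is taken even when $r=0$ is exactly this phenomenon. This is less an ``obstacle'' than a bookkeeping remark, so the proof is short: essentially it is the observation that a $\pm1$ walk of length $n$ reaching $r$ is the same data as a binary string of length $n$ with a prescribed number of ones.

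I would not belabor anything beyond this; no asymptotics or Stirling estimates are needed here, as Proposition~\ref{p:visit} is purely combinatorial and serves as the exact input to the later analysis of $\E(\delta_k)$ via Lemma~\ref{l:DE}.
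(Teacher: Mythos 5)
Your proof is correct and is essentially the paper's own argument: the paper counts lattice paths with $p$ northeast and $q$ southeast steps, solves $p+q=n$, $p-q=r$, and divides by $2^n$, which is exactly your $a,b$ computation in different notation. Your explicit handling of the parity and $|r|\le n$ conditions is a welcome (if minor) addition to what the paper leaves implicit.
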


\begin{proof}
  The number of lattice paths with $p$ northeast steps ($\nearrow$)
  and $q$ southeast steps ($\searrow$) is given by
  $\binom{p+q}{p}$. For such a path to go from $(0,0)$ to $(n,r)$, we
  must have $n = p+q$ and $r=p-q$. Therefore the number of lattice
  paths from $(0,0)$ to $(n,r)$ is $\binom{n}{(n+r)/2}$. Dividing by
  $2^n$, the total number of lattice paths with $n$ steps, yields the result.
\end{proof}

In order to apply this result to Lemma~\ref{l:DE}, we must either take
the sum of probabilities $p_{k,0}$ for $1 \leq k \leq n$ or use the
following result.

\begin{lemma}
  The probability that the origin has not been revisited by epoch $2k$
  is equal to the probability that a return to the origin occurs at
  epoch $2k$.
  \label{l:return}
\end{lemma}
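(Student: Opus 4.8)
The plan is to reduce both probabilities to coefficients of a single power series. Write $u_{2k} = p_{2k,0} = \binom{2k}{k}/2^{2k}$ for the probability of a return to the origin at epoch $2k$ (Proposition~\ref{p:visit}), and $q_{2k} = \Prob\{S_1 \ne 0, \ldots, S_{2k} \ne 0\}$ for the probability that the origin has not been revisited by epoch $2k$; the claim is exactly $q_{2k} = u_{2k}$. Introduce the generating functions $U(x) = \sum_{k \ge 0} u_{2k} x^{2k}$ and $Q(x) = \sum_{k \ge 0} q_{2k} x^{2k}$, with the convention $u_0 = q_0 = 1$, noting that only even epochs are relevant since the particle cannot return to the origin in an odd number of steps.

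First I would set up a last-exit decomposition. Every walk of length $2n$ sits at the origin at epoch $0$, so it has a well-defined last epoch at which it is at the origin; that epoch is even, say $2j$ with $0 \le j \le n$. Splitting the path at epoch $2j$, the initial segment is an arbitrary walk returning to the origin at epoch $2j$ (probability $u_{2j}$), while, by independence of the increments and translation invariance, the terminal segment, re-indexed from epoch $2j$, is an independent walk of length $2(n-j)$ that never revisits the origin (probability $q_{2(n-j)}$). Since these events partition the probability space as $j$ ranges over $0,\ldots,n$, we obtain the convolution identity $\sum_{j=0}^{n} u_{2j}\, q_{2(n-j)} = 1$ for every $n \ge 0$, which in generating-function form reads $U(x)\,Q(x) = \sum_{n \ge 0} x^{2n} = \dfrac{1}{1-x^2}$.

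It then remains only to identify $U(x)$. From Proposition~\ref{p:visit}, $u_{2k} = \binom{2k}{k}/4^k$, and the generalized binomial theorem gives $\sum_{k \ge 0} \binom{2k}{k} t^k = (1-4t)^{-1/2}$; setting $t = x^2/4$ yields $U(x) = (1-x^2)^{-1/2}$. Dividing the convolution identity by $U(x)$ gives $Q(x) = (1-x^2)^{-1}(1-x^2)^{1/2} = (1-x^2)^{-1/2} = U(x)$, and comparing coefficients of $x^{2k}$ gives $q_{2k} = u_{2k}$, which is the assertion of the lemma. (Equivalently, one could route the argument through the first-return generating function $F(x) = 1 - \sqrt{1-x^2}$ obtained from $U = 1/(1-F)$, since $q_{2k} = 1 - \sum_{j=1}^{k} f_{2j}$.)

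The main obstacle is the probabilistic bookkeeping in the last-exit decomposition: one must verify that conditioning on the location $2j$ of the last visit to the origin genuinely factors the probability, i.e. that the event ``the remaining $2(n-j)$ steps avoid the origin'' is independent of the initial segment and carries probability exactly $q_{2(n-j)}$ rather than some conditional variant. This is where care with the strong Markov property — or, concretely, with counting lattice paths and checking that the two segments can be chosen independently — is needed; once the identity $\sum_{j=0}^{n} u_{2j}\, q_{2(n-j)} = 1$ is established, the rest is routine manipulation of formal power series.
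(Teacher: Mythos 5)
Your proof is correct, but it takes a genuinely different route from the paper's. You establish the renewal (last-exit) identity $\sum_{j=0}^{n} u_{2j}\,q_{2(n-j)} = 1$, which is valid: since the walk starts at the origin, every path of length $2n$ has a well-defined last visit to the origin at some even epoch $2j$, and on the event $\{S_{2j}=0\}$ the avoidance condition for the remaining steps is a statement purely about the increments $X_{2j+1},\dots,X_{2n}$, so the probability does factor as $u_{2j}\,q_{2(n-j)}$ with no strong Markov subtlety (the time $2j$ is deterministic within each cell of the partition). Combining this with the closed form $U(x)=\sum_k \binom{2k}{k}(x^2/4)^k=(1-x^2)^{-1/2}$ forces $Q(x)=U(x)$, hence $q_{2k}=u_{2k}$. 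The paper instead argues bijectively: it maps paths ending at the origin to paths staying weakly above the axis by reflecting the initial segment up to the leftmost minimum, then relates weakly-above paths to strictly-above paths (and hence to never-returning paths) by a factor-of-two argument on the last step. The paper's argument is purely combinatorial and self-contained, in the spirit of its ``elementary approach'' section, and needs no power-series identities; yours is computationally lighter and meshes naturally with the generating-function machinery of the paper's Section~\ref{s:generating_function} (your $U(x)$ is exactly the $P_0(z)=1/\sqrt{1-z^2}$ computed there), at the cost of invoking the explicit central-binomial generating function rather than exhibiting an explicit correspondence between the two sets of paths.
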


\begin{proof}
  We will construct a bijection between paths from $(0,0)$ to $(2k,0)$
  and paths with $2k$ steps that remain weakly above the horizontal
  axis. Given a path from $(0,0)$ to $(2k,0)$, let $(j,m)$ be the
  leftmost occurrence of the minimum site visited. That is, $m \leq
  S_i$ for all $i$ and if equality holds then $j \leq i$. Reflect the
  portion of the path from $(0,0)$ to $(j,m)$ across the vertical line
  $t=j$, and slide the right endpoint of the reflected segment to
  $(2k,0)$. Consider $(j,m)$ to be the resulting path so that it now
  ends at $(2k,2|m|)$. This new path clearly remains weakly above the
  horizontal axis, and the process is easily reversible.

  \begin{figure}[ht]
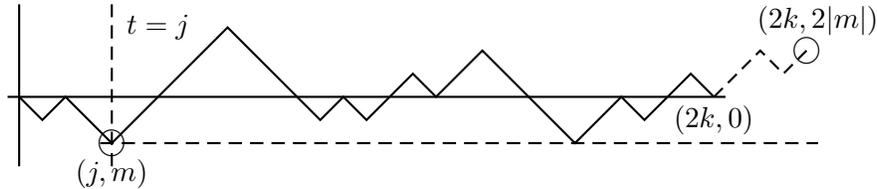

    \begin{center}
      \psset{xunit=.8em}
      \psset{yunit=.8em}
      \pspicture(0,-3)(36,4)
      \psline(0,-3)(0,4)
      \psline(-0.5,0)(30.5,0)
      \rput(30,-1){$(2k,0)$}
      \psline(0,0)(1,-1)(2,0)(4,-2)(9,3)(13,-1)(14,0)(15,-1)%
      (17,1)(18,0)(20,2)(24,-2)(26,0)(27,-1)(29,1)(30,0)
      \psline[linestyle=dashed](4,-3)(4,4)
      \psline[linestyle=dashed](3.5,-2)(34.5,-2)
      \rput(6,3){$t = j$}
      \rput(4,-2){$\bigcirc$}
      \rput(4,-3.3){$(j,m)$}
      \psline[linestyle=dashed](30,0)(32,2)(33,1)(34,2)
      \rput(34,2){$\bigcirc$}
      \rput(34.5,3.3){$(2k,2|m|)$}
      \endpspicture
      \caption{\label{f:nelson} A bijection between paths ending at
        the origin and paths staying weakly above the origin.}
    \end{center}
  \end{figure}
  
  We now claim that the number of paths of length $2k$ that lie
  strictly above the horizontal axis except for the origin is equal to
  one half the number of paths of length $2k$ that lie weakly above
  the horizontal axis. The former paths must all pass through the
  point $(1,1)$ after which they never fall below the horizontal line
  $S=1$. Thus resetting the origin to $(1,1)$ yields a path of length
  $2k-1$ that lies weakly above the horizontal axis. Since $2k-1$ is
  odd, the final point is at least $1$, and so adding another step,
  either northeast ($\nearrow$) or southeast ($\searrow$), the
  resulting path is of length $2k$ and still remains weakly above the
  horizontal axis. Since the last step has probability $1/2$ of either
  option, the claim is proved.

  The lemma now follows from the observation that the number of paths
  of length $2k$ that never return to the origin is twice the number
  of paths of length $2k$ that lie strictly above the horizontal axis
  except for the origin.
\end{proof}

The final ingredient is an approximation for central binomial
coefficients, which we can derive easily from Stirling's formula
\cite{Stirling1730}.

\begin{theorem}[Stirling's formula]
  We have
  \begin{equation}
    n! = \sqrt{2\pi} n^{n+1/2} e^{-n} \left(1 + \frac{1}{12n} +
      \frac{1}{288n^2} + O(n^{-3}) \right) \sim \sqrt{2\pi n}
    \left(\frac{n}{e}\right)^{n}. 
  \end{equation}
\end{theorem}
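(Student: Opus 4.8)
The plan is to prove Stirling's formula in the classical two-stage way: first show that $n!$ is asymptotically $C\,n^{n+1/2}e^{-n}$ for \emph{some} positive constant $C$, and then identify $C=\sqrt{2\pi}$ by a separate computation. For the first stage I would set $a_n = \ln n! - \left(n+\tfrac12\right)\ln n + n$ and study the increments
\[
  a_n - a_{n+1} = \left(n+\tfrac12\right)\ln\!\frac{n+1}{n} - 1 .
\]
Writing $\ln\frac{n+1}{n} = \ln\frac{1+t}{1-t}$ with $t = \frac{1}{2n+1}$ and expanding the logarithm as $2\bigl(t + \tfrac{t^3}{3} + \tfrac{t^5}{5}+\cdots\bigr)$, one finds that $a_n - a_{n+1}$ is positive and bounded above by a telescoping quantity such as $\frac{1}{12n}-\frac{1}{12(n+1)}$. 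Hence $(a_n)$ is decreasing and bounded below, so it converges to a finite limit $a$, giving $n! \sim e^{a}\, n^{n+1/2}e^{-n}$. This part is routine estimation of a series.

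The crux — and the only place the number $\pi$ can enter — is computing $e^{a}$. I would use the Wallis product. Let $W_m = \int_0^{\pi/2}\sin^m\theta\,d\theta$; integration by parts gives the recursion $W_m = \frac{m-1}{m}W_{m-2}$, whence $W_{2n} = \frac{\pi}{2}\cdot\frac{(2n)!}{4^n (n!)^2}$ and $W_{2n+1} = \frac{4^n(n!)^2}{(2n+1)!}$. Since $0<W_{2n+1}<W_{2n}<W_{2n-1}$ and consecutive ratios tend to $1$, one gets $W_{2n}/W_{2n+1}\to 1$, which rearranges to $\binom{2n}{n}\sim 4^n/\sqrt{\pi n}$. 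Substituting the provisional estimate $n!\sim e^{a}n^{n+1/2}e^{-n}$ into $\binom{2n}{n} = (2n)!/(n!)^2$ gives $\binom{2n}{n}\sim \sqrt{2}\,4^n/(e^{a}\sqrt{n})$, and comparing the two asymptotics forces $e^{a} = \sqrt{2\pi}$. This establishes the stated equivalence $n!\sim\sqrt{2\pi n}\,(n/e)^n$.

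For the refined expansion $n! = \sqrt{2\pi}\,n^{n+1/2}e^{-n}\bigl(1 + \tfrac{1}{12n} + \tfrac{1}{288n^2} + O(n^{-3})\bigr)$, I would upgrade the first stage with the Euler--Maclaurin summation formula applied to $f(x)=\ln x$ on $[1,n]$. Since $f^{(2j-1)}(x) = (2j-2)!\,x^{1-2j}$, Euler--Maclaurin produces
\[
  \ln n! = \left(n+\tfrac12\right)\ln n - n + a + \frac{1}{12n} - \frac{1}{360 n^3} + \cdots ,
\]
with $a=\tfrac12\ln(2\pi)$ already pinned down above and an explicitly bounded remainder at each order; exponentiating the tail $\tfrac{1}{12n}-\tfrac{1}{360n^3}+\cdots$ and collecting through order $n^{-2}$ yields $1+\tfrac{1}{12n}+\tfrac{1}{288n^2}+O(n^{-3})$. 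I expect the main technical obstacle to be the bookkeeping in this last step — controlling the Euler--Maclaurin remainder and the exponential expansion carefully enough to justify the error term — together with the fact that the constant genuinely requires the separate Wallis computation rather than falling out of the summation itself. An alternative worth mentioning is to run everything through $n! = \Gamma(n+1) = \int_0^\infty t^n e^{-t}\,dt$ and Laplace's method: the substitution $t = n + \sqrt{n}\,s$ concentrates the integrand near a Gaussian, and $\int_{-\infty}^{\infty} e^{-s^2/2}\,ds = \sqrt{2\pi}$ then supplies the constant directly, at the cost of a Watson-lemma-style expansion to recover the $1/(12n)$ correction.
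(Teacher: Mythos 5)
Your proposal is mathematically sound, but it is worth noting at the outset that the paper does not prove Stirling's formula at all: it states the theorem as a classical fact with a citation and immediately applies it to the central binomial coefficient in the proof of Theorem~\ref{t:delta_approximation}. So you have supplied a proof where the paper supplies none, and the comparison is between your argument and the standard literature rather than anything in the text. That said, your two-stage plan is the classical one and each step checks out: the increment computation $a_n - a_{n+1} = \left(n+\tfrac12\right)\ln\frac{n+1}{n} - 1 = \tfrac{t^2}{3} + \tfrac{t^4}{5} + \cdots$ with $t = \tfrac{1}{2n+1}$ correctly shows $(a_n)$ is decreasing while $\left(a_n - \tfrac{1}{12n}\right)$ is increasing, hence convergent; the Wallis product correctly yields $\binom{2n}{n} \sim 4^n/\sqrt{\pi n}$, which pins down $e^{a} = \sqrt{2\pi}$; and exponentiating the Euler--Maclaurin tail $\tfrac{1}{12n} - \tfrac{1}{360n^3} + \cdots$ does give $1 + \tfrac{1}{12n} + \tfrac{1}{2}\cdot\tfrac{1}{144n^2} + O(n^{-3}) = 1 + \tfrac{1}{12n} + \tfrac{1}{288n^2} + O(n^{-3})$, matching the stated coefficients. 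One amusing circularity to be aware of in the context of this particular paper: the Wallis-product step is essentially the statement $\binom{2n}{n} \sim 4^n/\sqrt{\pi n}$, which is exactly the estimate the paper extracts \emph{from} Stirling's formula in equation~\eqref{e:p2n}; your route derives that estimate independently and then bootstraps it into the full formula, which is legitimate but means the paper could have skipped Stirling entirely and used Wallis directly for its purposes. The only thing I would ask you to tighten in a full write-up is the Euler--Maclaurin remainder bound, which you correctly flag as the main bookkeeping burden.
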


\begin{theorem}
  For $n$ large, the expectation of $\delta_n$ is approximately given
  by
  \begin{equation}
    \E(\delta_n) \sim  \sqrt{\frac{2}{\pi n}}.
  \end{equation}
  \label{t:delta_approximation}
\end{theorem}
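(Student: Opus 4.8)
The plan is to combine the three lemmas developed above with Stirling's formula, handling even and odd epochs separately. First, by Lemma~\ref{l:DE} together with \eqref{e:expect_delta}, $\E(\delta_n) = \Prob\{\text{the origin has not been revisited by epoch }n\}$. Since the walk can only return to the origin after an even number of steps, for odd $n$ the event ``not revisited by epoch $n$'' coincides with ``not revisited by epoch $n-1$'', so $\E(\delta_n) = \E(\delta_{n-1})$; as $\sqrt{2/(\pi(n-1))} \sim \sqrt{2/(\pi n)}$, it is enough to treat the case $n = 2k$ even.

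For $n = 2k$, Lemma~\ref{l:return} identifies $\Prob\{\text{origin not revisited by epoch }2k\}$ with the probability that a return to the origin occurs at epoch $2k$, which is precisely $p_{2k,0}$. Proposition~\ref{p:visit} with $r = 0$ then gives $p_{2k,0} = 2^{-2k}\binom{2k}{k} = 4^{-k}\binom{2k}{k}$. It remains to estimate the central binomial coefficient: writing $\binom{2k}{k} = (2k)!/(k!)^2$ and substituting $m! \sim \sqrt{2\pi m}\,(m/e)^m$ in numerator and denominator, the powers of $e$ and of $k$ cancel and one is left with $\binom{2k}{k} \sim 4^k/\sqrt{\pi k}$. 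Hence $\E(\delta_{2k}) \sim 1/\sqrt{\pi k} = \sqrt{2/(2\pi k)}$, that is, $\E(\delta_n) \sim \sqrt{2/(\pi n)}$ for $n = 2k$, and by the previous paragraph for odd $n$ as well.

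Every step is routine; the only points requiring care are the parity bookkeeping — checking that the reduction from odd $n$ to $n-1$ is legitimate and does not perturb the leading-order asymptotic — and the cancellation in the Stirling estimate, where one must keep the $\sqrt{2\pi m}$ prefactors rather than just the crude $m! \sim (m/e)^m$ so that the constant comes out as $\sqrt{2/\pi}$ and not merely the order $n^{-1/2}$.
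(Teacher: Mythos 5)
Your proof is correct and follows essentially the same route as the paper: Lemma~\ref{l:DE}, then Lemma~\ref{l:return} to reduce to $p_{2k,0}$, then Proposition~\ref{p:visit} and Stirling's formula applied to the central binomial coefficient. The only difference is that you explicitly handle the odd-$n$ case via $\E(\delta_n)=\E(\delta_{n-1})$, a detail the paper's proof leaves implicit.
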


\begin{proof}
  By Lemma~\ref{l:DE}, the expectation of $\delta_n$ is the the
  probability of no return to the origin by epoch $n$. By
  Lemma~\ref{l:return}, for $n$ even this is equal to the probability
  that the particle is at the origin at epoch $n$. Therefore, by
  Proposition~\ref{p:visit}, we have
  \begin{equation}
    \E(\delta_{2k}) = \frac{1}{2^{2k}}\binom{2k}{k}.
    \label{e:p2n}
  \end{equation}
  Letting $n=2k$, Stirling's formula yields the result.
\end{proof}

Finally, to derive Theorem~\ref{t:Delta_approximation} from
Theorem~\ref{t:delta_approximation}, we use \eqref{e:deltas}:
\begin{equation}
  \E(\Delta_n) = \sum_{j=0}^{n} \E(\delta_j) 
  \sim \sqrt{\frac{2}{\pi}} \sum_{j=0}^{n} j^{-1/2} 
  = \sqrt{\frac{2}{\pi}} \left(2 \sqrt{n} \right)
   = \sqrt{\frac{8n}{\pi}}.
\end{equation}

\section{Two generating function approaches}
\label{s:generating_function}

Another approach to Theorem~\ref{t:Delta_approximation} is to derive a
closed form for the generating function of $\E(\Delta_n)$.  Define
the generating functions of $\delta_n$ and $\Delta_n$ by
\begin{equation}
  \delta(z) = \sum_{n \geq 0} \E(\delta_n) z^n
  \hspace{2em} \mbox{and} \hspace{2em}
  \Delta(z) = \sum_{n \geq 0} \E(\Delta_n) z^n.  
\end{equation}
From \eqref{e:deltas}, we have
\begin{equation}
  \Delta(z) = \sum_{n \geq 0} z^n \sum_{j=0}^{n} \E(\delta_j)
  = \sum_{j \geq 0} \E(\delta_j) \sum_{n \geq j} z^n
  = \frac{\delta(z)}{1-z} . 
  \label{e:Delta_z}
\end{equation}
Therefore it suffices to find a closed formula for $\delta(z)$.

Define $f_{n,r}$ to be the probability that the particle has first
reached site $r$ at epoch $n$. Then we have
\begin{equation}
  \E(\delta_k) = \Prob\{\delta_k=1\} = \sum_{r} f_{k,r}.
\end{equation}
Letting $F_r(z) = \sum_{n \geq 0} f_{n,r} z^n$ be the generating
function for $f_{n,r}$, we have
\begin{equation}
  \delta(z) = \sum_{r} F_r(z).
  \label{e:delta_F}
\end{equation}

Recall $p_{n,r}$ denotes that probability that the particle is at site
$r$ at epoch $n$. This probability may be decomposed into the first
arrival at site $r$, say at epoch $j$, followed by a loop back $n-j$
steps later. That is,
\begin{equation}
  p_{n,r} = \sum_{j=0}^{n} f_{j,r} p_{n-j,0}.
\end{equation}
Letting $P_r = \sum_{n \geq 0} p_{n,r} z^n $ be the generating
functions for $p_{n,r}$, this becomes
\begin{equation}
  P_r(z) = F_r(z) P_0(z).
  \label{e:PF}
\end{equation}

Returning now to $\delta(z)$, solving \eqref{e:PF} for $F_r(z)$ and
combining the result with \eqref{e:delta_F} yields
\begin{displaymath}
  \delta(z) = \sum_{r} \frac{P_r(z)}{P_0(z)}
  = \frac{1}{P_0(z)} \sum_{r} \sum_{n \geq 0} p_{n,r} z^n 
  = \frac{1}{P_0(z)} \sum_{n \geq 0} z^n \left(\sum_{r} p_{n,r}\right).
\end{displaymath}
The inner sum on the right hand side is the probability that some site is
visited at epoch $n$, which is a certainty. Therefore
\begin{equation}
  \delta(z) = \frac{1}{P_0(z)} \sum_{n \geq 0} z^n = \frac{1}{P_0(z)}\frac{1}{1-z}.
\end{equation}
Substituting back into \eqref{e:Delta_z}, we have proved the following.

\begin{theorem}
  The generating function for $\E(\Delta_n)$ is given by
  \begin{equation}
    \Delta(z) = \sum_{n \geq 0} \E(\Delta_n) z^n = \frac{1}{P_0(z)}\frac{1}{(1-z)^2}.
    \label{e:Delta_Z}
  \end{equation}
  \label{t:Delta_Z}
\end{theorem}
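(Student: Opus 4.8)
The plan is to assemble the generating-function identities established in this section into the single formula \eqref{e:Delta_Z}. First I would record the transfer from $\delta$ to $\Delta$: summing the relation \eqref{e:deltas} term by term and interchanging the order of summation gives $\Delta(z) = \delta(z)/(1-z)$, which is exactly \eqref{e:Delta_z}. So the whole problem reduces to pinning down $\delta(z)$ in closed form, and the target \eqref{e:Delta_Z} will follow by one substitution at the end.

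Next I would bring in the first-passage probabilities $f_{n,r}$. Decomposing a visit to site $r$ at epoch $n$ according to the epoch $j$ of the first arrival at $r$, followed by an $(n-j)$-step loop returning to $r$, yields the convolution $p_{n,r} = \sum_{j=0}^{n} f_{j,r}\, p_{n-j,0}$, hence at the level of generating functions $P_r(z) = F_r(z) P_0(z)$, which is \eqref{e:PF}. Solving this for $F_r(z)$ and summing over all sites $r$, using $\delta(z) = \sum_r F_r(z)$ from \eqref{e:delta_F}, gives $\delta(z) = P_0(z)^{-1} \sum_r P_r(z)$.

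The key simplification is then that $\sum_r p_{n,r}$ is the probability that the particle is \emph{somewhere} at epoch $n$, which is a certainty, so it equals $1$. Interchanging sums, $\sum_r P_r(z) = \sum_{n \geq 0} z^n = 1/(1-z)$, and therefore $\delta(z) = \frac{1}{P_0(z)}\frac{1}{1-z}$. Substituting this into \eqref{e:Delta_z} produces $\Delta(z) = \frac{1}{P_0(z)}\frac{1}{(1-z)^2}$, completing the proof.

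I do not anticipate a genuine obstacle here: once the bookkeeping identities \eqref{e:deltas}, \eqref{e:delta_F}, and \eqref{e:PF} are in hand, the argument is a purely formal manipulation of power series. The only point that deserves a word of care is the legitimacy of the rearrangements and the use of $\sum_r p_{n,r} = 1$ — namely, that for each fixed $n$ only finitely many sites are reachable, so every interchange of summation order is valid and all the series in play are well-defined as formal power series (and converge for $|z| < 1$).
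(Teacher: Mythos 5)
Your proposal is correct and follows essentially the same route as the paper: reduce to $\delta(z)$ via \eqref{e:Delta_z}, use the first-passage decomposition \eqref{e:PF} together with \eqref{e:delta_F}, and collapse $\sum_r p_{n,r}=1$ to get $\delta(z) = \frac{1}{P_0(z)}\frac{1}{1-z}$. Your added remark justifying the interchanges of summation is a small bonus the paper leaves implicit.
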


By Proposition~\ref{p:visit}, we deduce a closed form for $P_0(z)$ for
the simple walk,
\begin{displaymath}
  P_0(z) = \sum_{n \geq 0} p_{2n,0}z^{2n} = \sum_{n \geq 0}
  \frac{1}{2^{2n}} \binom{2n}{n} z^{2n} = \sum_{n \geq 0}
  \binom{1/2}{n} (z^{2})^{n} = \frac{1}{\sqrt{1-z^2}}.
\end{displaymath}
Here we have used the basic identity $(1-\zeta)^m = \sum_{k \geq 0}
\binom{-m}{k} \zeta^k$. Substituting this into \eqref{e:Delta_z} gives
\begin{equation}
  \Delta(z) = \frac{\sqrt{1-z^2}}{(1-z)^2}.
  \label{e:DZ}
\end{equation}

With a closed form for the generating function, we can get an estimate
on the coefficients. To do this, it is helpful to rewrite
\eqref{e:Delta_Z} as
\begin{equation}
  \Delta(z) = \frac{\sqrt{2}}{(1-z)^{3/2}} \left(1 + \frac{1}{2}(1-z)\right)^{1/2}.
  \label{e:Delta_estimate}
\end{equation}

\subsection{Using the Tauberian Theorem}
\label{s:tauberian}

Our first estimate uses the powerful Tauberian Theorem
\cite{Tauber1897}.

\begin{theorem}[Tauberian Theorem]
  Let $\{q_n\}$ be a monotone, nonnegative sequence with generating
  function $Q(z) = \sum_{n \geq 0} q_n z^n$. Then for $\rho>0$ and $L$
  a slowly varying function, we have
  \begin{displaymath}
    \displaystyle{Q(z) \sim \frac{1}{(1-z)^{\rho}} L(\frac{1}{1-z})}
    \mbox{ as } \displaystyle{z \rightarrow 1^{-}}
    \hspace{1em} \mbox{iff} \hspace{1em}
    \displaystyle{q_n \sim \frac{1}{\Gamma(\rho)} n^{\rho-1} L(n)}
    \mbox{ as } \displaystyle{n \rightarrow \infty}
  \end{displaymath}
  Here \emph{slowly varying} means that for all $\lambda$, $L(\lambda
  x)/L(x) \rightarrow 1$ as $x \rightarrow \infty$.
\end{theorem}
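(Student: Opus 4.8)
The plan is to prove the two implications of the ``iff'' separately; the substantive content is the Tauberian direction, $Q(z) \sim (1-z)^{-\rho} L(1/(1-z)) \Rightarrow q_n \sim n^{\rho-1} L(n)/\Gamma(\rho)$, in which the monotonicity of $\{q_n\}$ is the indispensable ``Tauberian hypothesis,'' while the converse (Abelian) direction needs no side condition and is a routine comparison of a power series with an integral. For the Abelian direction, assume $q_n \sim n^{\rho-1}L(n)/\Gamma(\rho)$, put $z = e^{-s}$ with $s \to 0^+$, and compare $\sum_{n \ge 0} q_n e^{-ns}$ with $\int_0^\infty \bigl(x^{\rho-1}L(x)/\Gamma(\rho)\bigr) e^{-sx}\,dx$; the two differ by a lower-order term because $q_n / \bigl(n^{\rho-1}L(n)/\Gamma(\rho)\bigr)\to 1$ and, $L$ being slowly varying, Karamata's uniform convergence theorem lets one replace $L(x)$ by $L(1/s)$ on the range $x \asymp 1/s$ carrying the mass of the integral. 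Substituting $x = t/s$ and using $\int_0^\infty t^{\rho-1}e^{-t}\,dt = \Gamma(\rho)$ gives $Q(e^{-s}) \sim s^{-\rho}L(1/s)$, which is the claim since $1-z \sim s$.

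For the Tauberian direction I would follow Karamata's elementary method. First absorb the slowly varying factor (Karamata's representation theorem reduces one, modulo the usual uniformity estimates, to $L \equiv 1$), and set $s = -\ln z$. Introduce the nondecreasing step function $U(x) = \sum_{n \le x} q_n$, so that $Q(e^{-s}) = \int_{0^-}^{\infty} e^{-sx}\,dU(x) \sim s^{-\rho}$. Applying the hypothesis to $z^{k+1}$ for each integer $k \ge 0$ and taking linear combinations yields
\[
  s^{\rho}\!\int_{0}^{\infty}\! g\!\left(e^{-sx}\right) e^{-sx}\, dU(x) \;\longrightarrow\; \frac{1}{\Gamma(\rho)}\!\int_{0}^{\infty}\! g\!\left(e^{-t}\right) e^{-t}\, t^{\rho-1}\,dt
\]
as $s \to 0^+$, for every polynomial $g$. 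The key step is to upgrade this from polynomials to the discontinuous choice of $g$ for which $e^{-sx} g(e^{-sx}) = \mathbf{1}_{[0,X]}(x)$: one sandwiches the relevant indicator between polynomials vanishing at the origin, from above and from below, which is legitimate precisely because $dU \ge 0$. Reading off the limit gives $U(X) \sim X^{\rho}/\Gamma(\rho+1)$. Finally, the \emph{monotonicity} of $\{q_n\}$ converts this partial-sum asymptotic into the termwise one: for fixed $\lambda > 1$, monotonicity bounds $q_n$ above and below by the increments $U(\lambda n) - U(n)$ and $U(n) - U(n/\lambda)$, each $\sim n^{\rho}(\lambda^{\pm\rho}-1)/\Gamma(\rho+1)$, divided by their $\Theta(n)$ numbers of terms; letting $\lambda \to 1$ (so that $(\lambda^{\rho}-1)/(\lambda - 1) \to \rho$ and $\rho/\Gamma(\rho+1) = 1/\Gamma(\rho)$) pins down $q_n \sim n^{\rho-1}/\Gamma(\rho)$, and reinstating $L$ completes the proof.

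I expect the polynomial-sandwiching step to be the main obstacle: approximating a step function by polynomials well enough that the one-sided estimates against the positive measure $dU$ actually sharpen to the correct constant, together with the bookkeeping needed to carry a genuine slowly varying $L$ through the argument rather than a pure power. Everything else is a change of variables or an appeal to the elementary theory of regular variation.
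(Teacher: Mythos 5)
The paper offers no proof of this statement at all---it is invoked as a classical black box with only a citation---so there is nothing internal to compare your argument against; what you have written is the standard proof of the Hardy--Littlewood--Karamata theorem, and it is essentially correct. The Abelian direction by comparison of $\sum q_n e^{-ns}$ with $\int_0^\infty x^{\rho-1}L(x)e^{-sx}\,dx/\Gamma(\rho)$, and the Tauberian direction by Karamata's polynomial method applied to $U(x)=\sum_{n\le x}q_n$ followed by the monotone-density step, is exactly how the result is proved in Feller or Korevaar, and the final limit $(\lambda^\rho-1)/(\lambda-1)\to\rho$ correctly produces the constant $1/\Gamma(\rho)$. Two points in your sketch need more care than you give them. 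First, one cannot literally ``reduce to $L\equiv 1$'' via the representation theorem; the clean route is to keep $L$ and normalize by $L(1/s)$, using the uniform convergence theorem to get $L(1/((k+1)s))/L(1/s)\to 1$, which is what makes the test-function limit for $g(u)=u^k$ come out as $(k+1)^{-\rho}$. Second, the sandwiching step approximates $g(u)=u^{-1}\mathbf{1}_{[1/e,1]}(u)$, which both jumps at $u=1/e$ and must be controlled near $u=0$ (i.e.\ $x\to\infty$, where $dU$ can carry unbounded mass); the approximation has to be one-sided \emph{and} $L^1$-close with respect to the limit measure $e^{-t}t^{\rho-1}\,dt$, which works because that measure charges no single point, and positivity of $dU$ is precisely what lets the one-sided polynomial bounds survive the passage to the limit. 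With those details filled in, your argument is a correct and complete substitute for the proof the paper omits.
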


We may now derive Theorem~\ref{t:Delta_approximation} from
\eqref{e:Delta_estimate} and the Tauberian Theorem with $\rho = 3/2$
and $L(x) = \sqrt{2}$:
\begin{equation}
  \E(\Delta_n) \sim \frac{1}{\Gamma(3/2)} n^{1/2} \sqrt{2}
  = \sqrt{\frac{8n}{\pi}}.
\end{equation}
Unfortunately, with such a general and powerful theorem, there are not
good estimates on the error of the approximation. 

\subsection{Using the $\Gamma$-function}
\label{s:Gamma}

Since the closed form of $\Delta(z)$ is fairly simple, we can use
properties of the $\Gamma$ function directly to find the error
terms. We begin by expanding \eqref{e:Delta_estimate} as follows:
\begin{eqnarray*}
  \Delta(z) & = & \frac{\sqrt{2}}{(1-z)^{3/2}} \left(1 +
    \frac{1}{2}(1-z)\right)^{1/2} \\
  & = & \frac{\sqrt{2}}{(1-z)^{3/2}} \sum_{k \geq 0}
    \binom{-1/2}{k}\left(\frac{1}{2}(1-z)^k\right) \\
  & = & \sqrt{2} \sum_{k \geq 0} \binom{-1/2}{k}
  \left(\frac{1}{2}\right)^k (1-z)^{k-3/2} \\
  & = & \sqrt{2} \sum_{k \geq 0} \binom{-1/2}{k}
  \left(\frac{1}{2}\right)^k \sum_{n \geq 0} \binom{3/2-k}{n} z^n.
\end{eqnarray*}
Isolating the coefficient of $z^n$ and manipulating using the $\Gamma$
function gives
\begin{eqnarray*}
  \E(\Delta_n) & = & \sqrt{2} \sum_{k \geq 0} \binom{-1/2}{k}
  \binom{3/2-k}{n} \left(\frac{1}{2}\right)^{k} \\
  & = & \sqrt{2} \sum_{k \geq 0} \binom{-1/2}{k}
  \left(\frac{1}{2}\right)^{k} \frac{\Gamma(3/2-k+n)}{\Gamma(3/2-k)n!} \\
  & = & \frac{\sqrt{2}}{\Gamma(3/2)}\frac{\Gamma(3/2+n)}{n!} \sum_{k \geq 0}
  \binom{-1/2}{k} \left(\frac{1}{2}\right)^{k}
  \frac{\Gamma(3/2-k+n)\Gamma(3/2)}{\Gamma(3/2-k)\Gamma(3/2+n)}
\end{eqnarray*}

We can manipulate the summand using the fundamental property of
$\Gamma$
\begin{displaymath}
  \Gamma(z+1) = z\Gamma(z),
\end{displaymath}
and we can approximate the outer term using the estimate
\begin{displaymath}
  \frac{\Gamma(n+a)}{n!} = n^{a-1} \left(1 + \frac{a(a-1)}{2n} +
    \frac{a(a-1)(a-2)(3a-1)}{24n^2} + O(n^{-3})\right).
\end{displaymath}
Combining all of these gives
\begin{equation}
  \E(\Delta_n) = \sqrt{\frac{8n}{\pi}} \left(1 +
    \frac{1}{4n} - \frac{1}{32n^2} + O(n^{-3}) \right).
\end{equation}

\vspace{\baselineskip}

\begin{center}
  \textsc{Acknowledgments}
\end{center}

The author is grateful to Elwyn Berlekamp for introducing her to this
problem and to the wonders of the world of finance, and to Sean Borman
for helpful feedback on early drafts.

\bibliographystyle{abbrv} 
\bibliography{rangeOfVariance}

\end{document}